\tikzset{
    auto,
    point/.style = {circle, draw, inner sep=0.02cm, fill, node contents={}},
}
\DeclareMathOperator{\Span}{\mathrm{span}}
\DeclareMathOperator{\Cone}{\mathrm{Cone}}
\DeclareMathOperator{\conv}{\mathrm{Conv}}
\DeclareMathOperator{\diag}{\mathrm{diag}}
\DeclareMathOperator{\Hom}{\mathrm{Hom}}
\DeclareMathOperator{\Aut}{\mathrm{Aut}}
\DeclareMathOperator{\A}{\mathbb{A}}
\DeclareMathOperator{\CC}{\mathbb{C}}
\DeclareMathOperator{\GG}{\mathbb{G}}
\DeclareMathOperator{\PP}{\mathbb{P}}
\DeclareMathOperator{\QQ}{\mathbb{Q}}
\DeclareMathOperator{\ZZ}{\mathbb{Z}}
\DeclareMathOperator{\Sgmone}{\Sigma (1)}
\DeclareMathOperator{\lSc}{\langle}
\DeclareMathOperator{\rSc}{\rangle}
\def\addgr{\GG_a^3}
\def\OO{\mathscr{O}}
\def\unipgr{\GG_a}
\theoremstyle{definition}
\newtheorem{example}{Example}
\theoremstyle{plain}
\newtheorem{lemma}{Lemma}
\newtheorem{proposition}{Proposition}
\newtheorem{theorem}{Theorem}
\begin{document}
\sloppy

\title[NON-PROJECTIVE COMPLETIONS OF AFFINE SPACES]{SMOOTH NON-PROJECTIVE EQUIVARIANT COMPLETIONS OF AFFINE SPACES}

\author[KIRILL SHAKHMATOV]{KIRILL SHAKHMATOV}
\date{\today}
\subjclass[2020]{Primary 14M25, 14M27; \ Secondary 14J50, 14L30}
\keywords{Affine space, toric variety, additive action, Cox ring}
\thanks{The author was supported by the grant RSF-19-11-00172}

\address{Moscow State University, Faculty of Mechanics and Mathematics, Department of Higher Algebra,
Leninskie Gory 1, Moscow, 119991 Russia}
\address{National Research University Higher School of Economics, Faculty of Computer Science,
Pokrovsky Boulevard 11, Moscow, 109028 Russia}
\email{bagoga@list.ru}

\begin{abstract}
In this paper we construct an equivariant embedding of the affine space \( \A^n \) with the translation group action into a complete non-projective algebraic variety \( X \) for all \( n \geq 3 \). The theory of toric varieties is used as the main tool for this construction. In the case of \( n = 3 \) we describe the orbit structure on the variety \( X \).
\end{abstract}

\maketitle

\section*{Introduction}

Let \( X \) be a complete irreducible algebraic variety of dimension \( n \) over the field of complex numbers \( \CC \). Consider a group \( \GG_a^n = (\CC^n, +) \) that acts on the affine space \( \A^n \) by translations. An open embedding \( \phi : \A^n \to X \) with an extension of the action \( \GG_a^n \times \A^n \to \A^n \) to a regular action \( \GG_a^n \times X \to X \) is called an equivariant embedding of affine space. Clearly, to determine an equivariant embedding means to define an effective regular action of group \( \GG_a^n \) on \( X \) with an open orbit. Such actions are called additive.

Study of equivariant completions of the affine space \( \A^n \) was started by Hassett and Tschinkel \cite{HT-GoEC}. It was shown that additive actions on the projective space \( \PP^n \) correspond to finite-dimensional local commutative associative algebras. Further results on additive actions can be found in \cite{A-FVaEC, AR-AAoTV} and in bibliography therein.

\smallskip

The category of equivariant completions of affine spaces is the object of interest. It is well-known that any smooth complete variety of dimension one or two is projective \cite[Chapter II.4]{H-AG}. In this paper we construct a non-projective completion of affine space, starting from dimension three. Our main tool is the theory of toric varieties. As a variety \( X \) we use one of the standard examples of a complete non-projective toric variety. Its fan is formed by cones over polytopes of a subdivision of a simplex \cite[Example 2.1.4]{BP-TT}. In Section~\ref{main results} we prove that \( X \) admits an additive action and describe its orbits. We acquire this action by using Cox rings \cite{C-HCR} and the quotient construction for toric varieties. When the action is defined on the spectrum of the Cox ring by explicit formulas, we show that this action induces an additive action on \( X \).

\smallskip

The author is grateful to his supervisor Ivan Arzhantsev for posing the problem, constant support and references, and Yulia Zaitseva for useful discussions and commentaries.


\section{Preliminaries}

In this section we recall main definitions and constructions and formulate propositions that will be used later. Let \( N \simeq \ZZ^n \) be a lattice of rank \( n \) and \( M = \Hom(N, \ZZ) \) be its dual lattice. We denote by \( N_{\QQ} \) the space \( N \otimes_{\ZZ} \QQ \simeq \QQ^n \) and by \( M_{\QQ} \) the space \( M \otimes_{\ZZ} \QQ \). Let \( \lSc \cdot, \cdot \rSc : M_{\QQ} \times N_{\QQ} \to \QQ \) be the canonical pairing. We define a \emph{polyhedral cone in} \( N \) as a subset
\[ \Cone(v_1, \dots, v_r) = \{ \lambda_1 v_1 + \dots + \lambda_r v_r \ | \ \lambda_1, \dots, \lambda_r \in \QQ_{\geq 0} \} \]
for some \( v_1, \dots, v_r \in N \). A cone is called \emph{strongly convex} if it does not contain subspaces of positive dimension. A finite set \( \Sigma \) of cones in \( N \) is called a \emph{fan} if any face of any cone in \( \Sigma \) is also a cone in \( \Sigma \) and the intersection of two cones in \( \Sigma \) is a face of each.

Each fan \( \Sigma \) in \( N \) corresponds to a toric variety \( X_{\Sigma} \), i.e.~a normal algebraic variety equipped with an effective action of the torus \( (\CC^{\times})^n \) with an open dense orbit. Let us recall the construction of variety \( X_{\Sigma} \). Let \( \sigma \) be a cone in \( N \). The cone
\[ \sigma^{\vee} = \{ u \in M_{\QQ} \ | \lSc u,v \rSc \geq 0 \ \forall v \in \sigma \} \]
in \( M \) is called \emph{dual to} \( \sigma \). An affine variety \( U_{\sigma} \) is defined as the spectrum of the finitely generated algebra
\[ \CC[ \sigma^{\vee} \cap M ] = \bigoplus_{m \in \sigma^{\vee} \cap M} \CC \chi^m, \]
which has the multiplication given by \( \chi^{m} \cdot \chi^{m'} = \chi^{m+m'} \). A variety \( X_{\Sigma} \) is covered by affine charts \( U_{\sigma} \) for all \( \sigma \) in \( \Sigma \), so that two charts \( U_{\tau} \) and \( U_{\tau'} \) are glued along their principal open subset \( U_{\tau \cap \tau'} \). An action of torus \( T = (\CC^{\times})^n \) is defined as follows. If a point \( t \in T \) corresponds to a group homomorphism \( M \to \CC^{\times} \) and a point \( x \in U_{\sigma} \) corresponds to a semigroup homomorphism \( \sigma^{\vee} \cap M \to \CC \), then the point \( t \cdot x \) corresponds to a homomorphism \( \sigma^{\vee} \cap M \to \CC \) such that \( u \mapsto t(u)x(u) \). A detailed description of this construction and the main properties of toric varieties can be found, for example, in \cite{F-ItTV, CLS-TV}. Let us recall some of these properties.

A toric variety \( X_{\Sigma} \) is smooth if and only if every cone \( \sigma \in \Sigma \) is generated by a subset of some basis of the lattice \( N \) \cite[Chapter~2.1]{F-ItTV}. It is known that orbits of toric variety \( X_{\Sigma} \) are in bijection with cones of the fan \( \Sigma \). A cone \( \sigma \in \Sigma \) of dimension \( k \) corresponds to an orbit \( O(\sigma) = \Hom(\sigma^{\perp} \cap M, \CC^{\times}) \simeq (\CC^{\times})^{n-k} \), where
\[ \sigma^{\perp} = \{ u \in M_{\QQ} \ | \ \lSc u,v \rSc = 0 \ \forall v \in \sigma \} .\]
Every group homomorphism \( \phi: \sigma^{\perp} \cap M \to \CC^{\times} \) can be extended to a semigroup homomorphism \( \sigma^{\vee} \cap M \to \CC \) if we define \( \phi(u) = 0 \) for any \( u \in \sigma^{\vee} \setminus \sigma^{\perp} \). Thus, an orbit \( O(\sigma) \) embeds into the chart \( U_{\sigma} \). We denote by \( V(\sigma) \) the closure of \( O(\sigma) \). For cones \( \sigma \) and \( \tau \) we write \( \tau \preceq \sigma \) if \( \tau \) is a face of \( \sigma \).

The next proposition is proved in \cite[Theorem 3.2.6]{CLS-TV} and \cite[Chapter 3.1]{F-ItTV}.

\begin{proposition}
For a toric variety \( X_{\Sigma} \) next statements hold:
\begin{enumerate}
\item for a cone \( \sigma \in \Sigma \) the affine chart \( U_{\sigma} \) is the union of orbits \( O(\tau) \) for \( \tau \preceq \sigma \):
\[ U_{\sigma} = \bigsqcup_{\tau \preceq \sigma} O(\tau); \]
\item \( \tau \preceq \sigma \) if and only if \( O(\sigma) \subseteq V(\tau) \), and
\[ V(\tau) = \bigsqcup_{\tau \preceq \sigma} O(\sigma); \]
\item if \( N_{\tau} \) is a subgroup of \( N \) generated by \( \tau \cap N \) and \( \psi : N \to N / N_{\tau} \) is the canonical projection, then the toric variety \( V(\tau) \) corresponds to the fan \( \{ \psi(\sigma) \ | \ \tau \preceq \sigma \} \) in the lattice \( N / N_{\tau} \).
\end{enumerate}
\end{proposition}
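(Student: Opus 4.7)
The plan is to attack all three parts through the description of points of the affine chart $U_\sigma$ as semigroup homomorphisms $x: \sigma^{\vee} \cap M \to \CC$, combined with the classical fact that faces of the dual cone $\sigma^\vee$ are exactly the subsets $\sigma^\vee \cap \tau^\perp$ for $\tau \preceq \sigma$.

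For part (1), I would attach to every point $x \in U_\sigma$ its \emph{support} $S_x = \{u \in \sigma^\vee \cap M : x(u) \neq 0\}$. Since $x$ is multiplicative and takes values in $\CC$, the complement of $S_x$ is an ideal of the monoid and $S_x$ itself is a face of the monoid $\sigma^\vee \cap M$. Under the face-duality bijection between faces of $\sigma$ and faces of $\sigma^\vee$, there is a unique $\tau \preceq \sigma$ with $S_x = \sigma^\vee \cap \tau^\perp \cap M$. The restriction of $x$ to this face is then a group homomorphism $\tau^\perp \cap M \to \CC^\times$, so $x \in O(\tau)$; conversely, every point of $O(\tau)$ for $\tau \preceq \sigma$ extends by zero to a semigroup homomorphism on $\sigma^\vee \cap M$, hence lies in $U_\sigma$. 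This gives the disjoint decomposition.

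For part (2), part (1) implies $V(\tau) \cap U_\sigma \neq \emptyset$ only if $\tau \preceq \sigma$, and $T$-invariance of $V(\tau)$ shows it is a union of orbits. To see that $O(\sigma) \subseteq \overline{O(\tau)}$ whenever $\tau \preceq \sigma$, I would pick a lattice vector $v$ in the relative interior of $\sigma$ (so $v \in \sigma$ but $v \notin \tau'$ for any proper face $\tau' \prec \sigma$), consider the one-parameter subgroup $\lambda_v: \CC^\times \to T$, and take a point $x_0 \in O(\tau)$; the limit $\lim_{t \to 0} \lambda_v(t) \cdot x_0$, computed on each character $\chi^u$, equals $x_0(u)$ when $\lSc u, v\rSc = 0$ (i.e.\ $u \in \sigma^\perp$) and vanishes otherwise. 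This limit is therefore a point of $O(\sigma)$, and running over $x_0 \in O(\tau)$ and translating by $T$ we reach all of $O(\sigma)$. Combining with the first assertion gives $V(\tau) = \bigsqcup_{\tau \preceq \sigma} O(\sigma)$.

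For part (3), I would cover $V(\tau)$ by the affine charts $V(\tau) \cap U_\sigma$ for $\tau \preceq \sigma$. By parts (1) and (2), this intersection is the closed subscheme of $U_\sigma$ cut out by $\chi^u = 0$ for $u \in (\sigma^\vee \cap M) \setminus \tau^\perp$, so its coordinate ring is $\CC[\sigma^\vee \cap \tau^\perp \cap M]$. The projection $\psi: N \to N/N_\tau$ induces a dual identification of $\tau^\perp \cap M$ with the dual lattice $M(\tau) = (N/N_\tau)^*$, under which $\sigma^\vee \cap \tau^\perp$ becomes $\psi(\sigma)^\vee$; hence $V(\tau) \cap U_\sigma$ is isomorphic to the affine toric variety of $\psi(\sigma)$ in $N/N_\tau$. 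The gluing of these charts along intersections matches the gluing in the toric variety of the collection $\Sigma/\tau := \{\psi(\sigma) : \tau \preceq \sigma\}$. The main technical point, and the one I would check carefully, is that $\Sigma/\tau$ is indeed a fan: strong convexity of $\psi(\sigma)$ follows because $N_\tau$ is exactly the largest subspace contained in any $\sigma$ with $\tau \preceq \sigma$, and the face/intersection axioms transfer from $\Sigma$ via $\psi$ using the condition $\tau \preceq \sigma \cap \sigma'$ for any two such $\sigma, \sigma'$.
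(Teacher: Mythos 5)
The paper gives no proof of this proposition at all---it is quoted with citations to CLS, Theorem 3.2.6 and Fulton, Chapter 3.1---and your sketch is exactly the standard argument from those references (supports of semigroup homomorphisms plus face duality of \( \sigma \) and \( \sigma^{\vee} \) for (1), limits along one-parameter subgroups \( \lambda_v \) with \( v \) in the relative interior of \( \sigma \) for (2), and the star fan with \( \CC[\sigma^{\vee} \cap \tau^{\perp} \cap M] \)-charts for (3)), so it is correct and in line with the paper's source. One small wording fix: strong convexity of \( \psi(\sigma) \) is not because ``\( N_{\tau} \) is the largest subspace contained in \( \sigma \)'' (a strongly convex \( \sigma \) contains no positive-dimensional subspace); the right statement is that the largest linear subspace contained in \( \sigma + (N_{\tau})_{\QQ} \) is \( (N_{\tau})_{\QQ} \), which follows from \( \tau \) being a face of \( \sigma \): if \( w_1, w_2 \in \sigma \) and \( w_1 + w_2 \in \Span(\tau) \), then \( w_1 + w_2 \in \sigma \cap \Span(\tau) = \tau \), hence \( w_1, w_2 \in \tau \).
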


\begin{example}
Let us fix a basis \( e_1, \dots, e_n \) in the lattice \( N \) and consider a fan \( \Sigma \) consisting of a cone \( \sigma = \Cone(\{ e_i \}_{i=1}^n) \) and all its faces. Let \( e_1^*, \dots, e_n^* \) be the dual basis in the lattice \( M \). Since \( \sigma^{\vee} = \Cone(\{ e_i^* \}_{i=1}^n) \), we have \( X_{\Sigma} = U_{\sigma} = \CC^n \). Consider a subset \( I \subseteq \{ 1, \dots, n \} \) and a cone \( \sigma_I = \Cone(\{ e_i \}_{i \in I}) \in \Sigma \). It is easy to show that
\[ O(\sigma_I) = \Hom( \Span(\{ e_i^* \}_{i \notin I}) \cap M, \CC^{\times} ) = \{ (x_1, \dots, x_n) \ | \ x_i = 0 \ \forall i \in I; x_i \neq 0 \ \forall i \notin I \}. \]
\end{example}

Let us study morphisms that are compatible with the structure of toric varieties. Consider a fan \( \Sigma' \) in a lattice \( N' \). Let \( f : N' \to N \) be a lattice homomorphism. Denote by \( f_{\QQ} \) its extension to a linear map \( N'_{\QQ} \to N_{\QQ} \), and let \( f^*: M \to M' = \Hom(N', \ZZ) \) be the homomorphism dual to \( f \). The homomorphism \( f \) is called \emph{compatible with the fans} \( \Sigma' \) and \( \Sigma \) if for every cone \( \sigma' \in \Sigma' \), there exists a cone \( \sigma \in \Sigma \) such that \( f_{\QQ}(\sigma') \subseteq \sigma \). In this case for every cone \( \sigma' \in \Sigma' \), there exists a cone \( \sigma \in \Sigma \) such that the restriction of \( f^* \) to \( \sigma^{\vee} \cap M \) defines a semigroup homomorphism \( \sigma^{\vee} \cap M \to (\sigma')^{\vee} \cap M' \). Thus we obtain a homomorphism of algebras \( \CC[ \sigma^{\vee} \cap M ] \to \CC[ (\sigma')^{\vee} \cap M' ] \) and the dual morphism of varieties \( \phi_f^{\sigma'} : U_{\sigma'} \to U_{\sigma} \). They glue to a morphism \( \phi_f : X_{\Sigma'} \to X_{\Sigma} \) which is called \emph{toric}.

\begin{proposition} \label{prop toric morphisms}
\cite[Lemma 3.3.21]{CLS-TV} Let \( {\phi = \phi_f : X_{\Sigma'} \to X_{\Sigma}} \) be a toric morphism. For a cone \( \sigma' \in \Sigma' \) let \( \sigma \) be the minimal cone of the fan \( \Sigma \) that contains \( f_{\QQ}(\sigma') \). Then \( \phi( O(\sigma') ) \subseteq O(\sigma) \), \( \phi( V(\sigma') ) \subseteq V(\sigma) \), and the induced morphism \( \phi \mid_{V(\sigma')} : V(\sigma') \to V(\sigma) \) is toric.
\end{proposition}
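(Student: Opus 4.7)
The plan is to handle the three claims in sequence. First I would prove $\phi(O(\sigma')) \subseteq O(\sigma)$ by translating to semigroup homomorphisms: a point of $O(\sigma')$ corresponds to $\gamma : (\sigma')^{\vee} \cap M' \to \CC$ that is nonzero exactly on $(\sigma')^{\perp} \cap M'$, and under $\phi$ it is sent to $\gamma \circ f^* : \sigma^{\vee} \cap M \to \CC$. The goal is then to show that $(\gamma \circ f^*)(u) \neq 0$ iff $u \in \sigma^{\perp}$. The easy direction follows from the identity $\lSc f^*(u), v' \rSc = \lSc u, f_{\QQ}(v') \rSc$. For the other direction, if $u \in \sigma^{\vee} \cap M$ and $f^*(u) \in (\sigma')^{\perp}$, then $f_{\QQ}(\sigma')$ lies in the supporting hyperplane $H_u = \{ v : \lSc u, v \rSc = 0 \}$; hence $\sigma \cap H_u$ is a face of $\sigma$, belongs to $\Sigma$, and contains $f_{\QQ}(\sigma')$. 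The minimality of $\sigma$ forces $\sigma \cap H_u = \sigma$, so $u \in \sigma^{\perp}$.

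For the second claim, I use the orbit decomposition $V(\sigma') = \bigsqcup_{\tau' \succeq \sigma'} O(\tau')$ given by part (2) of the preceding proposition. For each such $\tau'$, let $\tau \in \Sigma$ be the minimal cone containing $f_{\QQ}(\tau')$, so that $\phi(O(\tau')) \subseteq O(\tau)$ by the first claim. The key auxiliary fact is $\sigma \preceq \tau$: indeed, $\tau$ contains $f_{\QQ}(\sigma') \subseteq f_{\QQ}(\tau')$, so $\sigma$ and $\tau$ are two cones of the fan $\Sigma$ both containing $f_{\QQ}(\sigma')$. Since $\Sigma$ is a fan, $\sigma \cap \tau$ is a face of each; the minimality of $\sigma$ then forces $\sigma \cap \tau = \sigma$, i.e., $\sigma \preceq \tau$. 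Thus $O(\tau) \subseteq V(\sigma)$, and taking a union over $\tau' \succeq \sigma'$ gives $\phi(V(\sigma')) \subseteq V(\sigma)$.

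Finally, I would realize the restriction as a toric morphism by descending $f$ to a lattice homomorphism $\bar{f} : N'/N'_{\sigma'} \to N/N_{\sigma}$; this is well defined because $f(\sigma' \cap N') \subseteq \sigma \cap N \subseteq N_{\sigma}$. By part (3) of the preceding proposition, the fans defining $V(\sigma')$ and $V(\sigma)$ consist of the images of cones $\tau' \succeq \sigma'$ and $\rho \succeq \sigma$ under the respective projections. For each $\tau' \succeq \sigma'$, $\bar{f}_{\QQ}$ carries the image of $\tau'$ to the image of $f_{\QQ}(\tau')$, which lies in the image of $\tau$; combined with $\tau \succeq \sigma$ from above, this shows $\bar{f}$ is compatible with the two fans. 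A chart-by-chart check that $\phi|_{V(\sigma')}$ coincides with $\phi_{\bar{f}}$ then concludes the proof. The main technical point is the lemma $\sigma \preceq \tau$ in the second paragraph; the rest is a translation between the combinatorial description of orbits and faces and the algebraic description of the affine charts.
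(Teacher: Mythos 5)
Your argument is correct, but note that the paper itself offers no proof of this statement: it is quoted verbatim from the reference \cite[Lemma 3.3.21]{CLS-TV}, so there is nothing internal to compare against, and what you have written essentially reconstructs the standard argument from that source. The three steps are sound: the description of \( O(\sigma') \) by semigroup homomorphisms nonvanishing exactly on \( (\sigma')^{\perp} \cap M' \), the use of the supporting hyperplane \( H_u \) (for \( u \in \sigma^{\vee} \)) together with minimality of \( \sigma \) to get the reverse implication, and the observation \( \sigma \preceq \tau \) via the fan axiom plus minimality, which both yields \( \phi(V(\sigma')) \subseteq V(\sigma) \) and feeds the compatibility of \( \bar{f} \) with the star fans from part (3) of the preceding proposition. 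Two small remarks. First, the inclusion \( \phi(V(\sigma')) \subseteq V(\sigma) \) also follows in one line from continuity, \( \phi(\overline{O(\sigma')}) \subseteq \overline{\phi(O(\sigma'))} \subseteq V(\sigma) \), though your orbit-by-orbit route has the advantage of producing the cone \( \tau \succeq \sigma \) that you need later anyway. Second, the only place you merely assert rather than prove is the final ``chart-by-chart check'' that \( \phi\mid_{V(\sigma')} = \phi_{\bar{f}} \); this is routine but can be bypassed entirely by invoking the characterization of toric morphisms as morphisms carrying the dense torus to the dense torus and restricting to a group homomorphism there: by your first claim \( \phi \) maps \( O(\sigma') \) to \( O(\sigma) \), and on these tori it is \( \gamma \mapsto \gamma \circ f^* \), visibly a homomorphism \( \Hom((\sigma')^{\perp} \cap M', \CC^{\times}) \to \Hom(\sigma^{\perp} \cap M, \CC^{\times}) \), whose associated lattice map is exactly \( \bar{f} \).
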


There exists a class of fans that can be described in terms of convex polytopes in \( M_{\QQ} \). A fan \( \Sigma \) is called \emph{complete} if \( \bigcup_{\sigma \in \Sigma} \sigma = N_{\QQ} \). Recall that a toric variety \( X_{\Sigma} \) is complete if and only if its fan \( \Sigma \) is complete \cite[Chapter 2.4]{F-ItTV}. Let \( P \) be a convex polytope in \( M_{\QQ} \) such that the interior of \( P \) contains \( 0 \). Assume that every vertex of \( P \) belongs to \( M \). For every face \( Q \) of \( P \) we define a cone
\[ \sigma_Q = \{ v \in N_{\QQ} \ | \ \lSc u, v \rSc \leq \lSc u', v \rSc \forall u \in Q \ \forall u' \in P \}. \]
Collection of cones \( \sigma_Q \), as \( Q \) varies over all faces of \( P \), forms a fan \( \Sigma_P \) \cite[Chapter 1.5]{F-ItTV}. This fan is called \emph{dual to polytope} \( P \). It is easily seen that the fan \( \Sigma_P \) is complete. It is well-known that a complete toric variety \( X_{\Sigma} \) is projective if and only if the fan \( \Sigma \) is dual to some convex polytope \cite[Proposition 7.2.9]{CLS-TV}. These results make possible to construct examples of complete non-projective toric varieties starting from dimension 3, see \cite[Example 2.1.4]{BP-TT} and \cite[Chapter 1.5]{F-ItTV}.

\medskip

Now we recall the construction of Cox ring for toric varieties, see \cite{C-HCR} and \cite[Chapter 5]{CLS-TV}. We need some facts about quotients in algebraic geometry. A morphism of algebraic varieties \( \pi : Y \to X \) is called \emph{affine} if for any affine open subset \( U \subseteq X \) its preimage \( \pi^{-1} (U) \) is affine. Let \( G \) be an affine algebraic group that acts on a variety \( Y \), and let \( \pi : Y \to X \) be a morphism constant on \( G \)-orbits. The morphism \( \pi \) is called a \emph{good categorical quotient} if \( \pi \) is affine and for any open \( U \subseteq X \) the dual homomorphism of function algebras \( \mathscr{O}_X (U) \to \mathscr{O}_Y ( \pi^{-1} (U) ) \) induces an isomorphism \( \mathscr{O}_X (U) \simeq \mathscr{O}_Y ( \pi^{-1} (U) )^G \). In this case \( X \) is denoted by \( Y/\!/G \).

Let us recall the main properties of good categorical quotients. It is known that \( \pi \) maps disjoint closed \( G \)-invariant subsets of variety \( Y \) to disjoint closed subsets of \( Y/\!/G \). Thus, the preimage \( \pi^{-1}(p) \) of any point \( p \in Y/\!/G \) contains a unique closed \( G \)-orbit, so there is a one-to-one correspondence between closed \( G \)-orbits in \( Y \) and points of variety \( Y/\!/G \) \cite[Proposition 5.0.7]{CLS-TV}.

\begin{proposition} \label{prop geometric factor}
\cite[Proposition 5.0.8]{CLS-TV} For a good categorical quotient \( \pi : Y \to Y/\!/G \) the following are equivalent:
\begin{enumerate}
\item any \( G \)-orbit in \( Y \) is closed;
\item if \( x,y \in Y \), then \( \pi (x) = \pi (y) \) if and only if \( x \) and \( y \) are in the same \( G \)-orbit;
\item the morphism \( \pi \) induces a bijection \( \{ G \)-orbits in \( Y \} \leftrightarrow \{  \)points of \( Y/\!/G \} \).
\end{enumerate}
\end{proposition}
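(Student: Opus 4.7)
The plan is to prove the equivalence cyclically, $(1) \Rightarrow (2) \Rightarrow (3) \Rightarrow (1)$, relying on the two facts about good categorical quotients recalled immediately before the proposition: $\pi$ sends disjoint closed $G$-invariant subsets of $Y$ to disjoint closed subsets of $Y /\!/ G$, and every fiber $\pi^{-1}(p)$ contains a unique closed $G$-orbit.

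For $(1) \Rightarrow (2)$, one direction is immediate since $\pi$ is constant on $G$-orbits. Conversely, if $\pi(x) = \pi(y) = p$ and every orbit is closed, then $G \cdot x$ and $G \cdot y$ are both closed $G$-invariant subsets of $\pi^{-1}(p)$, so by uniqueness of the closed orbit in that fiber they coincide, forcing $x$ and $y$ to lie in the same orbit. For $(2) \Rightarrow (3)$, the assignment $G \cdot x \mapsto \pi(x)$ is a well-defined map from the set of $G$-orbits to points of $Y /\!/ G$, injective by (2); its surjectivity follows because each point of $Y /\!/ G$ has a nonempty fiber, which contains at least the unique closed orbit supplied by general quotient theory. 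For $(3) \Rightarrow (1)$, condition (3) says that every fiber $\pi^{-1}(p)$ is a single $G$-orbit; since such a fiber must contain a closed orbit, the orbit filling the fiber has no choice but to be closed.

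The only step requiring external input is the surjectivity of $\pi$ used in $(2) \Rightarrow (3)$, for which I would invoke the one-to-one correspondence between closed $G$-orbits and points of $Y /\!/ G$ already stated in the paragraph preceding the proposition. After that, the argument is essentially a bookkeeping exercise with fibers and orbits, and I do not expect any serious obstacle.
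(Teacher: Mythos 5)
Your argument is correct, but note that the paper itself offers no proof of this statement: it is quoted verbatim from \cite[Proposition 5.0.8]{CLS-TV}, and the only ``proof'' in the paper is the citation. Your cyclic scheme $(1)\Rightarrow(2)\Rightarrow(3)\Rightarrow(1)$ is essentially the standard argument given in that reference, and it uses exactly the two facts the paper recalls in the preceding paragraph: that $\pi$ separates disjoint closed $G$-invariant subsets (equivalently, each fiber $\pi^{-1}(p)$ contains a unique closed $G$-orbit) and that every fiber is nonempty. Two small points you leave implicit but should state: in $(1)\Rightarrow(2)$ and $(3)\Rightarrow(1)$ you need that each fiber $\pi^{-1}(p)$ is a closed $G$-invariant subset (closedness because $p$ is a closed point and $\pi$ is a morphism, invariance because $\pi$ is constant on orbits), so that the ``unique closed orbit in the fiber'' fact applies and so that a fiber which is in bijection with a single orbit is literally a union of orbits equal to that one orbit. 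With these remarks added, your proof is complete; what it buys over the paper's treatment is a self-contained verification from the stated quotient properties rather than an appeal to the textbook.
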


A good categorical quotient \( \pi : Y \to Y/\!/G \) is called \emph{geometric} if it satisfies the conditions of Proposition \ref{prop geometric factor}. In this case we write \( Y/\!/G = Y/G \).

\smallskip

Consider a fan \( \Sigma \) in the lattice \( N \). We denote by \( \sigma'(1) \) (or \( \Sigma'(1) \)) the set of rays of a cone \( \sigma' \) (or a fan \( \Sigma' \)). For any ray \( \rho \in \Sgmone \) the generating vector of the semigroup \( \rho \cap N \) is denoted by~\( p_{\rho} \). Assume that \( \Span_{\QQ} \{ p_\rho \}_{ \rho \in \Sgmone } = N_{\QQ} \) and let \( \{ f_i \}_{i=1}^n \) be a basis of the dual lattice \( M \). Consider a group
\[ G_{\Sigma} = \{ (t_\rho) \in (\CC^{\times})^{\Sgmone} \ | \displaystyle\prod_{\rho \in \Sgmone} t_\rho^{\lSc f_i, p_\rho \rSc} = 1, 1 \leq i \leq n \} \subseteq (\CC^{\times})^{\Sgmone}, \label{group formula} \tag{\( \ast \)} \]
that acts on \( \CC^{\Sgmone} \) by coordinate-wise multiplication. A subset \( C \subseteq \Sgmone \) is called a \emph{primitive collection} if for any cone \( \sigma \in \Sigma \) the subset \( C \) is not contained in \( \sigma(1) \), and for any proper subset \( C' \subseteq C \), there is a cone \( \sigma \in \Sigma \) such that \( C' \subseteq \sigma (1) \). The set of all primitive collections is denoted by \( \mathscr{C} (\Sigma) \).

Let \( Z = Z(\Sigma) = \underset{C \in \mathscr{C} (\Sigma)}{\bigcup} \mathbb{V}(x_\rho \ | \ \rho \in C) \subseteq \CC^{\Sgmone} \), where \( \mathbb{V}(x_\rho \ | \ \rho \in C) \) is a set of points \( (x_\rho) \in \CC^{\Sgmone} \) such that \( x_\rho = 0 \) for all \( \rho \in C \). The set \( Z \) is invariant under the action of the group \( G_{\Sigma} \). Thus, there is an induced action of \( G_{\Sigma} \) on \( \CC^{\Sgmone} \setminus Z \). Let \( \{ e_\rho \ | \ \rho \in \Sgmone \} \) be the standard basis of the lattice \( \ZZ^{\Sgmone} \). For any \( \sigma \in \Sigma \) we define \( \tilde{\sigma} =  \Cone(e_\rho \ | \ \rho \in \sigma (1)) \). Cones \( \tilde{\sigma} \) and all their faces form a fan \( \tilde{\Sigma} = \{ \tau \ | \ \exists \sigma \in \Sigma : \tau \preceq \tilde{\sigma} \} \).

\smallskip

The next proposition is proved in \cite[Proposition 5.1.9, Theorem 5.1.11]{CLS-TV}.

\begin{proposition}
\
\begin{enumerate}
\item The variety \( \CC^{\Sgmone} \setminus Z \) is a toric variety that corresponds to the fan \( \tilde{\Sigma} \).
\item The map \( e_\rho \mapsto p_\rho \) induces a map of lattices \( \ZZ^{\Sgmone} \to N \), which is compatible with the fans \( \tilde{\Sigma} \) in \( \ZZ^{\Sgmone} \) and \( \Sigma \) in \( N \).
\item The corresponding toric morphism \( \pi : \CC^{\Sgmone} \setminus Z \to X_{\Sigma} \) is a good categorical quotient by the action of the group \( G_{\Sigma} \), so that \( (\CC^{\Sgmone} \setminus Z) /\!/ G_{\Sigma} \simeq X_{\Sigma} \). Moreover, \( \pi \) is a geometric quotient if and only if the fan \( \Sigma \) is simplicial.
\end{enumerate}
\end{proposition}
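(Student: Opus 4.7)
My plan is to treat the three parts in order, with the bulk of the work concentrated in part (3). For part (1), I would first note that $\CC^{\Sgmone}$ is itself the toric variety attached to the fan consisting of $\Cone(e_\rho:\rho\in\Sgmone)$ and all of its faces, since the vectors $e_\rho$ form a basis of the lattice $\ZZ^{\Sgmone}$. Under the orbit--cone correspondence, each face $\Cone(e_\rho:\rho\in I)$ corresponds to the orbit $\{(x_\rho)\in\CC^{\Sgmone}:x_\rho=0\Leftrightarrow\rho\in I\}$. It therefore suffices to check that $I\subseteq\Sgmone$ fails to satisfy $I\subseteq\sigma(1)$ for every $\sigma\in\Sigma$ if and only if $I$ contains some primitive collection, which is immediate from the minimality clause in the definition. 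This identifies $Z$ with the union of orbits indexed by cones outside $\tilde\Sigma$, so $\CC^{\Sgmone}\setminus Z=X_{\tilde\Sigma}$. Part (2) is a direct check: the map $e_\rho\mapsto p_\rho$ extends $\QQ$-linearly, and for any $\tilde\tau=\Cone(e_\rho:\rho\in I)\in\tilde\Sigma$ we have $I\subseteq\sigma(1)$ for some $\sigma\in\Sigma$ by definition, so $f_{\QQ}(\tilde\tau)=\Cone(p_\rho:\rho\in I)\subseteq\sigma$.

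For part (3) the core work breaks into three steps. First, I would identify $G_{\Sigma}$ with the kernel of the torus homomorphism $(\CC^\times)^{\Sgmone}\to T_N$ dual to the inclusion $f^*:M\hookrightarrow\ZZ^{\Sgmone}$ (injective because the $p_\rho$ span $N_{\QQ}$); this just rewrites the defining equations $\prod_\rho t_\rho^{\lSc f_i,p_\rho\rSc}=1$, and implies that $\pi$, which on the big torus coincides with the dual of $f^*$, is $G_{\Sigma}$-invariant. Second, I would verify that $\pi$ is affine by showing $\pi^{-1}(U_\sigma)=U_{\tilde\sigma}$ for every $\sigma\in\Sigma$; this follows from Proposition~\ref{prop toric morphisms} once one notes that the cones of $\tilde\Sigma$ whose image under $f_{\QQ}$ is contained in a face of $\sigma$ are exactly the faces of $\tilde\sigma$. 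The resulting chart $U_{\tilde\sigma}\simeq\CC^{\sigma(1)}\times(\CC^\times)^{\Sgmone\setminus\sigma(1)}$ is affine. Third, and most delicate, I would establish the ring isomorphism
\[
\CC[\sigma^\vee\cap M]\ \simeq\ \CC[\tilde\sigma^\vee\cap\ZZ^{\Sgmone}]^{G_{\Sigma}},\qquad \chi^m\mapsto\chi^{f^*(m)},
\]
using that a character $\chi^u$ of $(\CC^\times)^{\Sgmone}$ is $G_{\Sigma}$-invariant if and only if $u\in f^*(M)$, and that under the identification $u=f^*(m)$ the cone condition $u\in\tilde\sigma^\vee$ is equivalent, via $\lSc f^*(m),e_\rho\rSc=\lSc m,p_\rho\rSc$, to $m\in\sigma^\vee$. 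Gluing these local quotients over $\Sigma$ then yields the global good categorical quotient.

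For the simplicial criterion I would invoke Proposition~\ref{prop geometric factor} and analyse when every $G_{\Sigma}$-orbit is closed. Fix a point $x\in\CC^{\Sgmone}\setminus Z$ whose zero coordinates are indexed by $\sigma(1)$ for some $\sigma\in\Sigma$; the fiber $\pi^{-1}(\pi(x))$ lies in $U_{\tilde\sigma}$, and if $\sigma$ is simplicial then the vectors $\{p_\rho\}_{\rho\in\sigma(1)}$ are linearly independent, so the restriction of $f_{\QQ}$ to $\Span(e_\rho:\rho\in\sigma(1))$ is injective and a dimension count forces the fiber to consist of a single $G_{\Sigma}$-orbit. If some $\sigma$ is not simplicial, a non-trivial linear relation among the $p_\rho$ produces a one-parameter subgroup of $G_{\Sigma}$ along which some $x_\rho$ can be degenerated to $0$ inside the orbit closure, exhibiting a non-closed orbit. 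I expect the main obstacle to be the third step above -- the ring isomorphism -- because it combines duality between $f$ and $f^*$, the explicit description of the $G_{\Sigma}$-invariants, and the cone inclusion $f^*(\sigma^\vee)\subseteq\tilde\sigma^\vee$ into one compatible statement; once this bookkeeping is set up cleanly, the orbit-closure analysis for the simplicial criterion rides on the same combinatorial dictionary.
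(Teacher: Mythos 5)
The paper offers no proof of this proposition at all: it is quoted directly from Cox--Little--Schenck \cite{CLS-TV} (Proposition 5.1.9 and Theorem 5.1.11), so there is no in-paper argument to compare against; what you have written is a reconstruction of the standard proof from that source, and in outline it follows exactly that route. Parts (1) and (2) are fine: the identification of \( Z \) with the union of torus orbits of \( \CC^{\Sgmone} \) whose cones lie outside \( \tilde{\Sigma} \) does follow from the primitive-collection dictionary (take a minimal subset of the zero set not contained in any \( \sigma(1) \)), and compatibility of \( e_\rho \mapsto p_\rho \) is immediate. In part (3), realizing \( G_{\Sigma} \) as the kernel of the torus homomorphism induced by \( f \), proving \( \pi^{-1}(U_\sigma) = U_{\tilde{\sigma}} \), and the invariant-ring isomorphism \( \CC[\sigma^{\vee} \cap M] \simeq \CC[\tilde{\sigma}^{\vee} \cap \ZZ^{\Sgmone}]^{G_{\Sigma}} \) via \( f^* \) (using \( \lSc f^*(m), e_\rho \rSc = \lSc m, p_\rho \rSc \) and exactness of the character functor) are all correct and are the heart of the cited proof.

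Two steps of your treatment of the geometric-quotient criterion are thinner than they should be. First, in the simplicial direction a dimension count alone does not force a fiber to be a single orbit: a fiber of dimension \( \dim G_{\Sigma} \) could a priori be a finite union of orbits of that dimension. You need one more ingredient, e.g.\ the fact (recalled in the paper) that a fiber of a good categorical quotient contains a unique closed orbit: simpliciality makes every stabilizer \( G_{\Sigma} \cap \mathrm{Stab}_{T'}(x) \) finite, so all orbits in the (closed) fiber have dimension \( \dim G_{\Sigma} \), hence all are closed, hence there is only one; alternatively one checks on character lattices that any \( u \in \tilde{\tau}^{\perp} \cap f^*(M) \) lies in \( f^*(\tau^{\perp} \cap M) \), so \( G_{\Sigma} \) surjects onto the kernel of \( O(\tilde{\tau}) \to O(\tau) \). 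Second, in the non-simplicial direction your one-parameter subgroup \( \lambda(s) = (s^{c_\rho}) \) comes from a relation \( \sum_{\rho \in \sigma(1)} c_\rho p_\rho = 0 \), which necessarily has coefficients of both signs because \( \sigma \) is strongly convex; consequently \( \lim_{s \to 0} \lambda(s) \cdot x \) does not exist for a generic \( x \), and ``degenerating some \( x_\rho \) to \( 0 \)'' fails from such a point. One must start from a point whose coordinates already vanish exactly at the rays with \( c_\rho < 0 \) (this point lies in \( \CC^{\Sgmone} \setminus Z \) since its zero set is contained in \( \sigma(1) \)); then the limit exists, stays in \( \CC^{\Sgmone} \setminus Z \), and has a strictly larger zero set, exhibiting a non-closed orbit. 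With these two repairs your outline becomes a complete proof along the standard lines of the cited reference.
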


As we can see, \( \tilde{\Sigma} \) is a subset of fan \( {\Sigma}_0 \) formed by a cone \( \Cone(e_\rho \ | \ \rho \in \Sgmone) \) and all its faces. The fan \( \Sigma_0 \) corresponds to the toric variety \( \CC^{\Sgmone} \). The identical map \( \ZZ^{\Sgmone} \to \ZZ^{\Sgmone} \) is compatible with the fans \( \tilde{\Sigma} \) and \( {\Sigma}_0 \) correspondingly, and its toric morphism is an embedding \( \CC^{\Sgmone} \setminus Z \hookrightarrow \CC^{\Sgmone} \). By Proposition \ref{prop toric morphisms} for any cone \( \sigma \in \tilde{\Sigma} \) we have \( O(\sigma)_{\tilde{\Sigma}} = O(\sigma)_{\Sigma_0} \setminus Z \).

\smallskip

Assume now that a toric variety \( X \) with an acting torus \( T \) is equipped with an additive action \( \GG_a^n \times X \to X \). We say that this action is \emph{normalized by torus} if the subgroup \( {\GG_a^n \subseteq \Aut(X)} \) is normalized by the subgroup \( T \subseteq \Aut(X) \). In this case \( T \) permutes \( \GG_a^n \)~-~orbits: for all \( t \in T \) and \( x \in X \) we have \( t \GG_a^n x = t \GG_a^n t^{-1} t x = \GG_a^n t x \). Therefore, an open dense \( \GG_a^n \)-orbit \( \OO \) is \( T \)-invariant. In addition, irreducible components of the closed subvariety \( X \setminus \OO \) are toric subvarieties of codimension 1.


\section{Main Results} \label{main results}

Let us fix a basis \( e_1, \dots, e_n \) in a lattice \( N \) of rank \( n \). Consider vectors
\[ {a_1 = e_1}, \dots, \ {a_n = e_n}, \ {a_0 = -e_1 - \cdots - e_n}, \ {b_1 = a_0 + a_1}, \ b_2 = a_0 + a_2, \dots, \ b_n = a_0 + a_n. \]
We denote by \( \conv(A) \) the convex hull of a set \( A \subseteq N_{\QQ} \). Let us consider a simplex \( P = \conv(a_0, a_1, \dots, a_n) \). Its faces of codimension \( 1 \) are \( {A_i = \conv(a_0, a_1, \dots, \widehat{a_i}, \dots, a_n),} \ {i = 0, \dots, n} \). Let \( b'_i = \frac{1}{2}b_i \). As we can see, the point \( b'_i \) lies in the segment \( [a_0, a_i] \), which is contained in the faces \( A_j, \ {j = 1, \dots, \widehat{i}, \dots, n} \) and not contained in the faces \( A_0 \) and \( A_i \). We subdivide each face \( A_i, \ i = 1, \dots, n \) into \( n \) simplices \( A'_{i,j} \) of dimension \( n-1 \), which are defined as follows (the illustration of the case \( n = 3 \) is represented on Figure~\ref{pic subdivision}):

\[ A'_{1,1} = \conv(a_2, \dots, a_n, b'_2), \ A'_{1,2} = \conv(a_3, \dots, a_n, b'_2, b'_3), \dots, \]
\[ A'_{1,n-1} = \conv(a_n, b'_2, \dots, b'_n), \ A'_{1,n} = \conv(b'_2, \dots, b'_n, a_0); \]
\[ \dots\dots \]
\[ A'_{n-1,1} = \conv(a_n, a_1, \dots, a_{n-2}, b'_n), \ A'_{n-1,2} = \conv(a_1, \dots, a_{n-2}, b'_n, b'_1), \dots, \]
\[ A'_{n-1,n-1} = \conv(a_{n-2}, b'_n, b'_1, \dots, b'_{n-2}), \ A'_{n-1,n} = \conv(b'_n, b'_1 \dots, b'_{n-2}, a_0); \]

\[ A'_{n,1} = \conv(a_1, \dots, a_{n-1}, b'_1), \ A'_{n,2} = \conv(a_2, \dots, a_{n-1}, b'_1, b'_2), \dots, \]
\[ A'_{n,n-1} = \conv(a_{n-1}, b'_1, \dots, b'_{n-1}), \ A'_{n,n} = \conv(b'_1, \dots, b'_{n-1}, a_0). \]

\begin{figure}[h]
\begin{tikzpicture}
\node (1) [label = below: $a_1$, point, yshift = 0cm, xshift = 0cm];
\node (2) [label = left: $a_2$, point, left = of 1, yshift = 1cm, xshift = -1cm];
\node (3) [label = right: $a_3$, point, right = of 1, yshift = 1cm, xshift = 1cm];
\node (4) [label = above: $a_0$, point, above = of 1, yshift = 2.5cm, xshift = 0cm];
\node (5) [label = right: $b'_3$, point, right = of 1, yshift = 2.25cm, xshift = 0cm];
\node (6) [label = below left: $b'_1$, point, above = of 1, yshift = 0.6cm, xshift = 0cm];
\node (7) [label = left: $b'_2$, point, left = of 1, yshift = 2.25cm, xshift = 0cm];
\path (1) edge (2);
\path (1) edge (3);
\path (1) edge (4);
\path (2) edge (4);
\path (3) edge (4);
\path (7) edge (6);
\path (6) edge (5);
\path[dashed] (2) edge (3);
\path[dashed] (7) edge (5);
\path (1) edge (5);
\path (2) edge (6);
\path[dashed] (3) edge (7);
\end{tikzpicture}
\caption{}
\label{pic subdivision}
\end{figure}

Consider the fan \( \Sigma \) in \( N \) over this subdivision of the simplex \( P \).

\begin{theorem} \label{main theorem}
\
\begin{enumerate}
\item The variety \( X = X_{\Sigma} \) is smooth, complete and non-projective.
\item The variety \( X \) admits an additive action normalized by the acting torus.
\end{enumerate}
\end{theorem}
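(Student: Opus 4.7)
The two parts are essentially independent. For (1), smoothness and completeness reduce to direct checks on the maximal cones of \( \Sigma \), while non-projectivity follows from the standard wall-cycle obstruction. For (2), I construct commuting locally nilpotent derivations \( \delta_1,\dots,\delta_n \) on the Cox ring of \( X \) that are \( G_\Sigma \)-invariant and preserve \( \CC^{\Sgmone}\setminus Z \); these descend via \( \pi \) to commuting \( \GG_a \)-subgroups of \( \Aut(X) \), giving an additive action normalized by \( T \).

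\smallskip

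The maximal cones of \( \Sigma \) are \( \Cone(A_0)=\Cone(e_1,\dots,e_n) \) and the cones \( \Cone(A'_{i,j}) \) for \( 1\le i,j\le n \). The first is trivially unimodular. For the others, the primitive generators lie in \( \{a_0,a_1,\dots,a_n,b_1,\dots,b_n\} \); using the relations \( b_\ell = a_0+a_\ell \) and elementary row operations, each generator matrix reduces to a matrix of determinant \( \pm 1 \), so every maximal cone is generated by a basis of \( N \). Completeness is immediate because the simplices \( A_0 \) and \( A'_{i,j} \) triangulate \( \partial P \) and \( 0\in\mathrm{int}(P) \), so their cones cover \( N_\QQ \). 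For non-projectivity, if \( \Sigma \) were the normal fan of some polytope then the associated strictly convex support function \( h \) would satisfy strict wall inequalities which, summed around the cycle of maximal simplices linking one of the midpoints \( b'_i \), telescope to a contradiction; this is the standard obstruction of \cite[Example 2.1.4]{BP-TT}, and it extends to all \( n\ge 3 \) by the cyclic symmetry of the subdivision.

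\smallskip

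For each \( i=1,\dots,n \), set
\[ \delta_i \;=\; x_{a_0}\prod_{k\ne i} x_{b_k}\cdot\frac{\partial}{\partial x_{a_i}}. \]
Each \( \delta_i \) is locally nilpotent, and they pairwise commute since the coefficient of each \( \delta_j \) involves none of \( x_{a_i} \). Each \( \delta_i \) is \( G_\Sigma \)-invariant: the exponent vector of its coefficient, minus \( e_{a_i}\in\ZZ^{\Sgmone} \), equals \( \bigl(\lSc -e_i^*,p_\rho\rSc\bigr)_{\rho\in\Sgmone} \), which is the image of \( -e_i^*\in M \) under the map \( M\to\ZZ^{\Sgmone} \) defining \( G_\Sigma \). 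Hence \( \exp(t\delta_i) \) commutes with \( G_\Sigma \) and preserves \( \CC^{\Sgmone}\setminus Z \), so it descends to a \( \GG_a \)-subgroup of \( \Aut(X) \); together these yield a \( \GG_a^n \)-action on \( X \), normalized by \( T \) because each \( \delta_i \) carries a well-defined \( T \)-weight \( -e_i^* \). On the chart \( U_{A_0}\cong\A^n \) with coordinates \( u_j=\chi^{e_j^*}=x_{a_j}\big/\bigl(x_{a_0}\prod_{k\ne j} x_{b_k}\bigr) \), a direct computation gives \( \delta_i(u_j)=\delta_{ij} \); thus the action restricts to the standard translation of \( \GG_a^n \) on \( \A^n \), which is the open \( \GG_a^n \)-orbit. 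In particular the action is additive.

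\smallskip

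\noindent\emph{Main obstacle.} The delicate step is non-projectivity: an explicit wall-cycle obstruction must be produced and shown to work for every \( n\ge 3 \). By contrast, once \( -e_i^*\in M \) are identified as the relevant Demazure roots, the construction of the additive action is essentially automatic, and the identity \( \delta_i(u_j)=\delta_{ij} \) makes the open-orbit verification immediate.
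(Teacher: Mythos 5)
Your part (2) is correct and is essentially the paper's own construction: in the Cox coordinates \( (x_0,\dots,x_n,x'_1,\dots,x'_n) \) your derivations \( \delta_i \) exponentiate to exactly the action \( x'_j \mapsto x'_j + c_j x_0x_1\cdots\widehat{x_j}\cdots x_n \) used in the paper, your weight computation (the \( \delta_i \) are homogeneous of Cox degree zero, with \( T \)-weight the Demazure root \( -e_i^* \)) is the right reason the \( \GG_a \)-subgroups commute with \( G_\Sigma \) and are normalized by the torus, and the chart identity \( \delta_i(u_j)=\delta_{ij} \) on \( U_{\Cone(a_1,\dots,a_n)} \) is a clean alternative to the paper's dimension count for the orbit of the point \( (1,\dots,1,0,\dots,0) \). (Like the paper, you leave implicit the small check that the exceptional set \( Z \) is \( \GG_a^n \)-invariant, which is what legitimizes the descent to \( X \); for \( n=3 \) this is immediate from the list of primitive collections.)

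The genuine gap is in part (1), at its only nontrivial point: non-projectivity. You appeal to the "standard obstruction of \cite[Example 2.1.4]{BP-TT}" and assert that it "extends to all \( n\ge 3 \) by the cyclic symmetry of the subdivision", but that reference concerns the three-dimensional example, and the extension to arbitrary \( n \) is precisely what has to be proved; your sketch never exhibits the cycle of strict inequalities, and the cycle you gesture at --- walls "linking one of the midpoints \( b'_i \)" --- is not the one that works: it is not shown (and is doubtful) that summing strict convexity inequalities around the star of a single ray \( \Cone(b_i) \) telescopes to anything contradictory. The contradiction in this example is global, not local. Concretely, the argument that does work (and is the paper's) uses the relations \( a_i + b_{i-1} - a_{i-1} = b_i \) for \( i=1,\dots,n \) (indices read cyclically, with \( b_n \) in place of \( b_0 \)): each triple \( a_i, b_{i-1}, a_{i-1} \) lies in a single maximal cone of \( \Sigma \) (for instance \( a_1, b_n, a_n \) lie in the cone over \( A'_{n-1,1} \)), while \( b_i \) does not lie in that cone; if \( \Sigma \) were dual to a polytope with vertices \( u_k \), the function \( \phi=\min_k u_k \) would satisfy \( \phi(a_i)+\phi(b_{i-1})-\phi(a_{i-1}) = u_j(b_i) > \phi(b_i) \) for each \( i \), and summing these \( n \) inequalities the \( \phi(a_i) \) cancel, giving \( \phi(b_1)+\cdots+\phi(b_n) > \phi(b_1)+\cdots+\phi(b_n) \). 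Without producing such a cycle valid for every \( n\ge 3 \), the non-projectivity claim --- which you yourself identify as the delicate step --- remains unproved.
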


\begin{proof}
It is obvious that the fan \( \Sigma \) is complete. One can easily prove that any set of \( n \) vectors generating a cone of maximal dimension from \( \Sigma \) is a basis of the lattice \( N \). Thus, the variety \( X \) is smooth. The next lemma finishes the proof of the first assertion of theorem.

\begin{lemma}
The fan \( \Sigma \) is not dual to any polytope.
\end{lemma}

\begin{proof}
Assume that the fan \( \Sigma \) is dual to some polytope \( Q \). Since \( \dim (\sigma_R) + \dim (R) = n \) for any face \( R \) of the polytope \( Q \), and the subdivision of the simplex \( P \) contains \( n^2 + 1 \) simplices of dimension \( n - 1 \), the polytope \( Q \) has \( n^2 + 1 \) vertices \( u_1, \dots, u_{n^2 + 1} \). These vertices correspond to cones of maximal dimension
\[ \sigma_i = \sigma_{u_i} = \{ v \in N_{\QQ} \ | \ \lSc u_i, v \rSc \leq \lSc u', v \rSc \ \forall u' \in Q \}, \ \ i = 1, \dots, n^2 + 1. \]
Consider a function \( \phi(x) = \min_k u_k(x) \) on \( N_{\QQ} \). By definition of \( \sigma_i \) we have \( \phi \! \mid_{\sigma_i} = u_i \). Since the vectors \( a_1, b_n, a_n \) lie in the same cone \( \omega_j = \sigma_{n-1,1} \), we obtain
\[ \phi(a_1) + \phi(b_n) - \phi(a_n) = u_j(a_1) + u_j(b_n) - u_j(a_n)  = u_j(a_1 + b_n - a_n). \]
By an equation \( a_1 + b_n - a_n = b_1 \) we get \(  u_j(a_1 + b_n - a_n) = u_j(b_1) \). The vector \( b_1 \) is not contained in the cone \( \omega_j \), but lies in some other cone \( \omega_l \) of the fan \( \Sigma \). Therefore, \( \phi(b_1) = u_l(b_1) < u_j(b_1) \). Finally, we have \( \phi(a_1) + \phi(b_n) - \phi(a_n) > \phi(b_1) \). Similarly, since \( a_i + b_{i-1} - a_{i-1} = b_i \),  we obtain \( \phi(a_i) + \phi(b_{i-1}) - \phi(a_{i-1}) > \phi(b_i) \) for all \( i = 2, \dots, n \). By summing up these inequalities we get a contradiction \( \phi(b_1) + \cdots + \phi(b_n) > \phi(b_1) + \cdots + \phi(b_n) \).
\end{proof}

Now we proceed to a proof of the second assertion. Let \( b_0 = a_0 \). The set \( \Sgmone \) consists of \( 2n+1 \) rays \( \rho_i = \Cone(b_i), \ 0 \leq i \leq n \) and \( \rho'_j = \Cone(a_j), \ 1 \leq j \leq n \). We order coordinates \( (x_0, x_1, \dots, x_n, x'_1, \dots, x'_n) \) on \( \CC^{\Sgmone} \) in accordance with the order of rays \( (\rho_0, \rho_1, \dots, \rho_n, \rho'_1, \dots, \rho'_n) \). Let \( Y = \CC^{\Sgmone} \setminus Z \), where \( Z = Z(\Sigma) \). We write conditions~(\ref{group formula}) in terms of diagonal characters \( \omega_0, \omega_1, \dots, \omega_n, \omega'_1, \dots, \omega'_n \) of quasitorus \( G = G_{\Sigma} \):
\[
    \left \{
    \begin{array}{c}
         - \omega_0 - \omega_2 - \cdots - \omega_n + \omega'_1 = 0 \\
         - \omega_0 - \omega_1 - \omega_3 - \cdots - \omega_n + \omega'_2 = 0 \\
         \dots \\
         - \omega_0 - \omega_1 - \cdots - \omega_{n-1} + \omega'_n = 0 \\
    \end{array}
    \right . \implies
    \left \{
    \begin{array}{c}
         \omega'_1 = \omega_0 + \omega_2 + \cdots + \omega_n \\
         \omega'_2 = \omega_0 + \omega_1 + \omega_3 + \cdots + \omega_n \\
         \dots \\
         \omega'_n = \omega_0 + \omega_1 + \cdots + \omega_{n-1} \\
    \end{array}
    \right .
\]
It follows thence
\[ G = \{ \diag(t_0, \ t_1, \dots, \ t_n, \ t_0 t_2 \dots t_n, \dots, \ t_0 t_1 \dots t_{n-1}) \ | \ t_0, t_1, \dots, t_n \in \CC^{\times} \}. \]

Let us denote by \( T' \) and \( T \) the tori \( (\CC^{\times})^{\Sgmone} \) and \( (\CC^{\times})^n \) acting on the toric varieties \( Y \) and \( X \), respectively. Consider an action of a group \( \unipgr^n = \{ (c_1, \dots, c_n) \in \CC^n \} \) on \( Y \) given by formulas
\[
    \left .
    \begin{array}{cr}
    	x_i \mapsto x_i, & \ 0 \leq i \leq n, \\
    	\\ \medskip
    	x'_j \mapsto x'_j + c_j x_0 x_1 \dots \widehat{x_j} \dots x_n, & \ 1 \leq j \leq n. \\
    \end{array}
    \right .
    \label{action} \tag{\( \ast \ast \)}
\]
One can easily see that this action commutes with the action of the group \( G \). Therefore, we have an induced action on variety \( X \). In addition, \( \unipgr^n \)-orbits in \( X \) correspond to \( {G \times \unipgr^n \text{-orbits}} \) in \( Y \). Aside from that, as we can see from formulas (\ref{action}), the action of \( \unipgr^n \) on \( Y \) is normalized by the torus \( T' \). Hence, \( T \) permutes \( \unipgr^n \)-orbits in \( X \). Consider a point \( {x \in \CC^{\Sgmone}} \) with coordinates \( x_0 = x_1 = \dots = x_n = 1 \), \( x'_1 = \dots = x'_n = 0 \). The collection \( {C = \{ \rho'_1, \dots, \rho'_n \}} \) is not primitive, thus \( x \in Y \). Let \( \OO' \) be the \( G \times \unipgr^n \)-orbit of point \( x \). We have \( {\OO' = (\CC^{\times})^{n+1} \times \CC^n} \) and \( \dim(\OO') = 2n + 1 \), therefore the \( \unipgr^n \)-orbit \( \pi (\OO') = \OO' / G \) on \( X \) is of dimension \( n \). Thus, \( \OO = \pi (\OO') \) is an open dense orbit in \( X \). The second assertion of Theorem~\ref{main theorem} follows.
\end{proof}

Now we describe orbits of the additive action \( \GG_a^n \times X_{\Sigma} \to X_{\Sigma} \) that we constructed in the proof of Theorem~\ref{main theorem} in the case of \( n = 3 \).

\begin{proposition}
In terms of the aforementioned notation
\begin{enumerate}
\item the closed subvariety \( X_{\Sigma} \setminus \mathscr{O} \) consists of four irreducible components \( X_0, X_1, X_2, X_3 \);
\item the component \( X_0 \) is isomorphic to the projective plane \( \PP^2 \) with the trivial \( \addgr \)-action;
\item each component \( X_j, \ j = 1,2,3 \) is isomorphic to the blowup of the Hirzebruch surface \( \mathbb{F}_1 \) at a point; typical \( \addgr \)-orbits in \( X_j \) form a one-parameter family of one-dimensional orbits, the complement of this family is the union of curves of fixed points \( S_{jk}, \ {k \in \{ 0, 1, 2, 3 \} \setminus \{ j \} = \{ 0, k', k'' \}} \), where \( S_{jk} \simeq \PP^1 \) for all \( k \), and
\[ |S_{j0} \cap S_{jk'}| = |S_{j0} \cap S_{jk''}| = 1, \ S_{j0} \cap S_{jk'} \neq S_{j0} \cap S_{jk''}, \ S_{jk'} \cap S_{jk''} = \emptyset; \]
\item if \( j, l \in \{ 1, 2, 3 \}, \ j \neq l \), then the intersection of components \( X_j \) and \( X_l \) equals \( S_{jl} = S_{lj} \), the intersection of three components \( X_1, \ X_2 \) and \( X_3 \) is empty; moreover, the intersection of the component \( X_0 \) and a component \( X_j \) is a curve \( S_{j0} \), and the intersection of \( X_0 \) and a pair of components \( X_j \) and \( X_l \) is a single point, which is different for different pairs.
\end{enumerate}
\end{proposition}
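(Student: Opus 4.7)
The plan is to exploit the toric description of $X = X_{\Sigma}$: every closed subvariety appearing in the statement is torus-invariant, so each claim reduces to a combinatorial computation with the fan $\Sigma$ combined with the explicit formulas~(\ref{action}). For (1), the open $G \times \GG_a^3$-orbit $\OO' \subset Y$ through the chosen base point is $(\CC^\times)^4 \times \CC^3$ in the coordinates $(x_0, x_1, x_2, x_3, x'_1, x'_2, x'_3)$; since $\Sigma$ is simplicial, $\pi$ is geometric, so $\pi^{-1}(\OO) = \OO'$ and
\[ X \setminus \OO \;=\; V(\rho_0) \cup V(\rho_1) \cup V(\rho_2) \cup V(\rho_3), \]
which yields the four irreducible components $X_i = V(\rho_i)$.

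For the surface identifications in (2) and the first half of (3), I apply part (3) of the first proposition of Section~\ref{main results}'s preliminary section to each $\rho_i$ and describe $V(\rho_i)$ via its star fan in $N/N_{\rho_i} \simeq \ZZ^2$. For $\rho_0 = \Cone(a_0)$ the three maximal cones containing it are $\sigma_{1,3}, \sigma_{2,3}, \sigma_{3,3}$; projection modulo $a_0$ sends $a_1, a_2, a_3$ to three vectors summing to zero, giving the fan of $\PP^2$. For $\rho_j$, $j \in \{1,2,3\}$, exactly five maximal cones contain $\rho_j$ and their projections form a complete smooth fan with five rays in $N/N_{\rho_j} \simeq \ZZ^2$; the self-intersection numbers of the five torus-invariant divisors form the multiset $\{-2,-1,-1,0,1\}$, which via the classification of smooth complete toric surfaces identifies $X_j$ with the blow-up of $\mathbb{F}_1$ at the torus-fixed point lying on both the $(-1)$-curve and one of the $0$-curves.

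The orbit-theoretic parts of (2) and (3) drop out of formulas~(\ref{action}) specialized to $\{x_i = 0\}$. On $\{x_0 = 0\}$ every monomial $x_0 x_1 \widehat{x_j} x_3$ vanishes, so $\GG_a^3$ acts trivially on $X_0$. On $\{x_j = 0\}$ for $j \geq 1$, only the shift of $x'_j$ survives (since each of the other monomials contains $x_j$ as a factor), and only where $\prod_{i \neq j} x_i \neq 0$, so the residual action factors through a single $\GG_a$ with typical orbit an affine line in the $x'_j$-direction; the fixed-point locus on $X_j$ is thus the union $\bigcup_{i \neq j} X_j \cap X_i$, and each component $X_j \cap X_i = V(\Cone(b_j, b_i)) =: S_{ji}$ is a $\PP^1$ because the corresponding $2$-dimensional cone is a face of exactly two maximal cones of $\Sigma$.

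Finally, for (4) I combine the pairwise and triple intersection calculations. The pairwise intersections $X_j \cap X_l = V(\Cone(b_j, b_l))$ are already the curves $S_{jl}$. The triple intersections $X_0 \cap X_j \cap X_l$ for $\{j,l\} \subset \{1,2,3\}$ correspond to the maximal cones $\sigma_{3,3}, \sigma_{2,3}, \sigma_{1,3}$, giving three distinct torus-fixed points, whereas $X_1 \cap X_2 \cap X_3 = V(\Cone(b_1, b_2, b_3))$ is empty because this cone is not in $\Sigma$. Restricting these global intersections to $X_j$ yields the three claims within (3): $S_{j0} \cap S_{jk'}$ and $S_{j0} \cap S_{jk''}$ are the two distinct points $X_0 \cap X_j \cap X_{k'}$ and $X_0 \cap X_j \cap X_{k''}$, and $S_{jk'} \cap S_{jk''} = X_1 \cap X_2 \cap X_3 = \emptyset$. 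The main obstacle I expect is the unambiguous identification of $X_j$ as $\mathrm{Bl}_p\mathbb{F}_1$: the star-fan computation is routine combinatorics, but one must use enough invariants, here the full sequence of self-intersection numbers, to rule out other smooth complete toric surfaces with five torus-invariant curves.
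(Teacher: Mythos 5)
Your proposal is correct, and its global architecture is the one the paper uses: realize \( X \setminus \OO \) through the Cox quotient as the union of the four divisors corresponding to \( b_0, b_1, b_2, b_3 \), compute the star fans to identify \( X_0 \simeq \PP^2 \) and \( X_j \) as a blowup of \( \mathbb{F}_1 \), and read the orbit structure off formulas~(\ref{action}) restricted to \( \{x_i = 0\} \). The differences are in how the sub-steps are verified, and they are worth comparing. The paper computes all intersections by hand in Cox coordinates (exhibiting, e.g., \( S'_{10} \cap S'_{12} \) as an explicit \( G \)-orbit) and obtains \( S_{jk} \simeq \PP^1 \) by a normality argument (quotients preserve normality, normal curves are smooth); you instead invoke the orbit--cone correspondence, identifying \( S_{jk} = V(\Cone(b_j,b_k)) \), getting \( \PP^1 \) because a two-dimensional cone of a complete three-dimensional fan lies in exactly two maximal cones, and killing the triple intersection \( X_1 \cap X_2 \cap X_3 \) via the primitive collection \( \{\rho_1,\rho_2,\rho_3\} \). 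This is cleaner and avoids the quotient-theoretic detour, at the price of having to justify \( \pi^{-1}(\OO)=\OO' \) and \( \pi(Y_i)=V(\rho_i) \), which your geometric-quotient remark does handle since \( \Sigma \) is simplicial. For the surface identification, the paper simply reflects the star fan \( (0,1), (-1,0), (-1,-1), (0,-1), (1,-1) \) onto a standard blowup fan, whereas you use self-intersection numbers; note that in general the complete invariant of a smooth complete toric surface is the \emph{cyclic sequence} of self-intersections, not the multiset, so you should either record the sequence \( (1,-1,-1,-2,0) \) (which you effectively have, since you computed the fan) or exhibit the blow-down directly as the paper does -- here the multiset happens to suffice because the only five-ray surfaces are blowups of Hirzebruch surfaces at a torus-fixed point, but the argument as phrased is slightly loose. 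Also, strictly speaking the rays of the star fan of \( \rho_0 \) are the images of \( b_1,b_2,b_3 \), which coincide with those of \( a_1,a_2,a_3 \) modulo \( a_0 \); this is what makes your \( \PP^2 \) computation agree with the paper's.
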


\begin{proof}
We keep the notation that was used in the proof of Theorem~\ref{main theorem}. The complement \( {Y \setminus \OO'} \) is covered by irreducible subvarieties of dimension six
\[ Y_i = \{ x_i = 0 \} \setminus Z, \ 0 \leq i \leq 3; \]
thus, denoting \( X_i = \pi (Y_i) \), we obtain a decomposition into irreducible two-dimensional subvarieties \( X \setminus \OO = \displaystyle\bigcup_{i=0}^3 X_i \). Assertion (1) follows.

To acquire a fan of a variety \( X_i \), we take the quotient \( N / \ZZ b_i \) and consider the projections of all cones that contain the vector \( b_i \). Since \( N = \ZZ b_0 \oplus \ZZ a_1 \oplus \ZZ a_2 \), we have \( N / \ZZ b_0 = \ZZ a_1 \oplus \ZZ a_2 \). The canonical projection is given by
\[ a_1 \mapsto (1, 0), \ a_2 \mapsto (0, 1), \ a_3 \mapsto (-1, -1), \]
\[ b_0 \mapsto (0, 0), \ b_1 \mapsto (1, 0), \ b_2 \mapsto (0, 1), \ b_3 \mapsto (-1, -1). \]
We get a fan in \( \ZZ^2 \), which is formed by vectors \( (1, 0), \ (0, 1), \ (-1, -1) \). Thus, \( X_0 \simeq \PP^2 \). Since the fan \( \Sigma \) is invariant under coordinate permutations on \( N \), we get \( X_1 \simeq X_2 \simeq X_3 \). Notice that \( N = \ZZ b_3 \oplus \ZZ a_2 \oplus \ZZ a_3 \). Therefore, \( N / \ZZ b_3 = \ZZ a_2 \oplus \ZZ a_3 \) and the projection is
\[ a_1 \mapsto (-1, 0), \ a_2 \mapsto (1, 0), \ a_3 \mapsto (0, 1), \]
\[ b_0 \mapsto (0, -1), \ b_1 \mapsto (-1, -1), \ b_2 \mapsto (1, -1), \ b_3 \mapsto (0, 0). \]
The acquired two-dimensional fan is formed by vectors \( (0, 1), (-1, 0), (-1, -1), (0, -1), (1, -1) \). We reflect the lattice \( \ZZ^2 \) through the vertical axis and then through the horizontal axis. The transformed fan corresponds to the blowup of the Hirzebruch surface at a fixed point that corresponds to the first quadrant of the plane.

\smallskip

One can easily see that
\[ \mathscr{C}(\Sigma) = \{ \{ \rho'_1, \rho_0 \}, \{ \rho'_2, \rho_0 \}, \{ \rho'_3, \rho_0 \}, \{ \rho'_1, \rho_2 \}, \{ \rho'_2, \rho_3 \}, \{ \rho'_3, \rho_1 \}, \{ \rho_1, \rho_2, \rho_3 \} \}. \label{primitive collections} \tag{\( \ast \ast \ast \)} \]
By formulas~(\ref{action}) we get that if \( x_0 = 0 \) then the group \( \addgr \) acts trivially, hence \( X_0 \) consists of fixed points. Assertion (2) follows.

If \( x \in Y_1 \) then \( x'_3 \neq 0 \) by equality ~(\ref{primitive collections}). Consider an open \( G \)-invariant subset \( U'_1 \) of the variety \( Y_1 \), satisfying conditions \( x_0 \neq 0, \ x_2 \neq 0, \ x_3 \neq 0 \), that is \( U'_1 = Y_1 \cap \{ x_0 x_2 x_3 \neq 0 \} \). The kernel of the \( \addgr \)-action on \( U'_1 \) is two-dimensional, because it changes the coordinate \( x'_1 \) only. The decomposition into irreducible components of the complement of \( U'_1 \) in \( Y_1 \) is \( S'_{10} \cup S'_{12} \cup S'_{13} \), where \( S'_{1k} = Y_1 \cap \{ x_k = 0 \} \). Notice that
\[ S'_{10} \cap S'_{12} = G \cdot (0, 0, 0, 1, 1, 1, 1), \ S'_{10} \cap S'_{13} = G \cdot (0, 0, 1, 0, 1, 1, 1), \ S'_{12} \cap S'_{13} = \emptyset. \]
Let \( S_{1k} = \pi(S'_{1k}) \). The variety \( S'_{1k} \) is smooth, hence normal. Since the quotient preserves normality, and an algebraic curve is normal if and only if it is smooth, we have \( {\PP^1 \simeq S_{10} \simeq S_{12} \simeq S_{13}} \).

Similarly, we get \( Y_2 = U'_2 \sqcup (S'_{20} \cup S'_{21} \cup S'_{23}) \) and \( Y_3 = U'_3 \sqcup (S'_{30} \cup S'_{31} \cup S'_{32}) \), where \( \displaystyle U'_j = Y_j \cap \Big\{ \prod_{\substack{k=0 \\ k \neq j}}^3 x_k \neq 0 \Big\}, \ S'_{jk} = Y_j \cap \{ x_k = 0 \}. \) Therefore,
\[ Y_1 \cap Y_2 = S'_{12}, \ Y_1 \cap Y_3 = S'_{13}, \ Y_2 \cap Y_3 = S'_{23}, \ Y_1 \cap Y_2 \cap Y_3 = \emptyset; \]
\[ {Y_0 \cap Y_j = S'_{j0}}, \ {|\pi(Y_0 \cap Y_j \cap Y_l)| = 1}. \]
Assertions (3)-(4) follow if we let \( S_{jk} = \pi(S'_{jk}) \).
\end{proof}


{}


\begin{thebibliography}{}
\bibitem{A-FVaEC}
Ivan Arzhantsev. Flag varieties as equivariant compactifications of \( \GG_a^n \). Proc. Amer. Math. Soc. 139 (2011), no.~3, 783--786
%
\bibitem{AR-AAoTV}
Ivan Arzhantsev and Elena Romaskevich. Additive actions on toric varieties. Proc. Amer. Math. Soc.~145~(2017), no. 5, 1865-1879
%
\bibitem{BP-TT}
Victor Buchstaber and Taras Panov. \emph{Toric Topology}. Math. Surv. and Monogr.~204, Amer. Math. Soc., Providence, RI, 2015
%
\bibitem{C-HCR}
David Cox. The homogeneous coordinate ring of a toric variety. J. Alg. Geom. 4 (1995), 17–50
%
\bibitem{CLS-TV}
David Cox, John Little, and Henry Schenck. \emph{Toric Varieties}. Graduate Studies in Math. 124, Amer. Math. Soc., Providence, RI, 2011
%
\bibitem{F-ItTV}
William Fulton. \emph{Introduction to Toric Varieties}. Annales of Math. Studies 131, Princeton University Press, Princeton, NJ, 1993
%
\bibitem{H-AG}
Robin Hartshorne. \emph{Algebraic Geometry}. Graduate Texts in Math. 52, Springer-Verlag, New York, NY, 1977
%
\bibitem{HT-GoEC}
Brendan Hassett and Yuri Tschinkel. Geometry of equivariant compactifications of \( \GG^n_a \). Int. Math. Res. Notices 1999 (1999), no.~22, 1211--1230
%
\end{thebibliography}
\end{document}